\newcommand{\heute}{23 December 2015}
\theoremstyle{plain}
\newtheorem{theorem}{Theorem}[section]
\newtheorem{lemma}[theorem]{Lemma}
\newtheorem{corollary}[theorem]{Corollary}
\newtheorem{proposition}[theorem]{Proposition}
\theoremstyle{remark}
\newtheorem*{notn}{Notation}
\newcommand{\enref}[1]{\textup{(\ref{enum:#1})}}
\newcommand{\dashTwo}[1]{\textup{(\ref{two}${}'$)}}
\newcommand{\ignore}[1]{}
\newcommand{\f}[1][p]{\mathbb{F}_{#1}}
\newcommand{\Gro}[1]{Gr\"ob\-ner}
\newcommand{\Id}{\operatorname{Id}}
\DeclareMathOperator{\Bild}{Im}
\DeclareMathOperator{\rad}{rad}
\DeclareMathOperator{\soc}{soc}
\DeclareSymbolFont{bbold}{U}{bbold}{m}{n}
\DeclareSymbolFontAlphabet{\mathbbold}{bbold}
\newcommand{\stmod}[1]{\operatorname{stmod}(#1)}
\begin{document}

\title{The quaternion group has ghost number three}
\subjclass[2010]{Primary 20C20, Secondary 20D15, 20J06}
\thanks{The first author was supported by  the Scientific and Technical Research Council
of Turkey (T\"UB\.ITAK-BIDEP-2219)}
\author[F. Altunbulak Aksu]{Fatma Altunbulak Aksu}
\email{altunbulak@cankaya.edu.tr}
\address{Dept of Mathematics and Computer Science \\ {\c C}ankaya University \\ Ankara \\ Turkey}
\author[D.~J. Green]{David J. Green}
\email{david.green@uni-jena.de}
\address{Institut f\"ur Mathematik \\ Friedrich-Schiller-Universit\"at Jena \\ 07737 Jena \\ Germany}
\keywords{Quaternion group, ghost map, ghost number, Dade's generators, Kronecker quiver, linear relation}
\date{\heute}

\begin{abstract}
We prove that the group algebra of the quaternion group $Q_8$ over any field of characteristic two has ghost number three.
\end{abstract}

\maketitle

\section{Introduction}
\noindent
The study of ghost maps in stable categories originated with Freyd's generating hypothesis in homotopy theory~\cite{Freyd:generating}, which is still an open question. In this paper we are concerned with ghosts in modular representation theory.
Let $G$ be a group and $K$ a field of characteristic $p$. A map $f \colon M \rightarrow N$ in the stable category $\stmod{KG}$ of finitely generated $KG$-modules is called a \emph{ghost} if it vanishes under Tate cohomology, that is if $f_* \colon \hat{H}^*(G,M) \rightarrow \hat{H}^*(G,N)$ is zero. The ghost maps then form an ideal in $\stmod{KG}$, and Chebolu, Christensen and Min\'{a}\v{c}~\cite{CheboluChristensenMinac:ghosts} define the \emph{ghost number} of $KG$ to be the nilpotency degree of this ideal.

Determining the exact value of the ghost number is hard in all but the simplest cases. In \cite{ChristensenWang:ghostNumbers},  Christensen and Wang studied ghost numbers for $p$-group algebras. They gave conjectural upper and lower bounds for the ghost number of an arbitrary $p$-group, and also showed that the ghost number (over a field of characteristic two) of the quaternion group $Q_8$ is either three or four. In our earlier paper~\cite{ghostnumber}, we established most cases of their conjectural bounds. In this paper, we shall prove the following theorem.

\begin{theorem}
\label{thm:main}
Let $K$ be any field of characteristic two. Then the group algebra $KQ_8$ has ghost number three.
\end{theorem}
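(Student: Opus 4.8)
Since Christensen and Wang~\cite{ChristensenWang:ghostNumbers} have shown that the ghost number of $KQ_8$ is three or four, and in particular at least three, the plan is to prove the matching upper bound, namely that the cube $\mathcal{G}^3$ of the ghost ideal vanishes. I would phrase everything through the ghost projective class: its projective objects are the direct summands of finite direct sums of the ``spheres'' $\Omega^n K$, and since $Q_8$ has periodic cohomology of period four this class is $\mathcal{P}:=\operatorname{add}(K\oplus\Omega K\oplus\Omega^2 K\oplus\Omega^3 K)$, generated by four modules, with $K\in\mathcal{P}$ generating $\stmod{KQ_8}$. By the standard dictionary between the nilpotency of a projective ideal and cellular length (cf.~\cite{ChristensenWang:ghostNumbers,ghostnumber}), $\mathcal{G}^3=0$ is equivalent to the following assertion: every finitely generated $KQ_8$-module $M$ fits into a tower $0=M_0\to M_1\to M_2\to M_3=M$ of distinguished triangles $M_{i-1}\to M_i\to P_i\to\Sigma M_{i-1}$ with each $P_i\in\mathcal{P}$; equivalently, $M$ is built from an object of $\mathcal{P}$ by at most two further cone constructions. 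This is the statement I would prove.

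It suffices to treat indecomposable $M$, since cellular length of a direct sum is the maximum over its summands and projective summands vanish in the stable category. Here the module theory of $KQ_8$ enters: it has $2$-rank one, so every non-projective indecomposable is $\Omega$-periodic, and it is an algebra of quaternion type with $\tau=\Omega^2$, so its stable Auslander--Reiten quiver is a union of tubes. There are the four syzygies $\Omega^i K$, lying in finitely many exceptional tubes, together with a one-parameter family of rank-one tubes carrying the ``band'' modules. Since the theorem must hold over an arbitrary field of characteristic two, I would parametrise the rank-one tubes by the closed points of $\mathbb{P}^1_K$ --- equivalently, by monic irreducible polynomials over $K$ --- rather than pass to the algebraic closure. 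The syzygies themselves have cellular length one, so only the band modules and the modules of higher quasi-length remain.

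The core of the proof is to bound the cellular length of all of these by three. The naive approach --- extending a module of quasi-length $n$ by one sphere at a time --- yields only a bound that grows with $n$, so instead one realises each such module directly as the cone of a single, carefully chosen morphism between large direct sums of the four spheres, which keeps the cellular length at two, or at three after one correction step. The mechanism is that the graded $\operatorname{Hom}$-spaces $\underline{\Hom}(\Omega^a K,\Omega^b K)\cong\hat{H}^{a-b}(Q_8,K)$ are one- or two-dimensional, and the two-dimensional ones (in degrees $\equiv 1,2\pmod 4$) organise the ghost maps among the spheres into a Kronecker quiver; a morphism between sums of a fixed pair of spheres is then a pencil, i.e.\ essentially a representation of the Kronecker quiver, and its cone is governed by the Kronecker classification. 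Matching each tube module --- those of large quasi-length, and those supported at a non-rational point of $\mathbb{P}^1_K$ --- with an explicit such pencil is carried out using Dade's generators together with the formalism of linear relations on the relevant (co)homology, which simultaneously handles the combinatorial bookkeeping and shows that the constructions descend to $K$.

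The step I expect to be the main obstacle is precisely this last one: making the Kronecker-pencil description of the higher tube modules work uniformly in the quasi-length and, above all, over an arbitrary field of characteristic two. Over an algebraically closed field one could lean on Erdmann's classification together with a few explicit matrices, but for general $K$ one must verify that the pencils, their cones, and the resulting identifications are all defined over $K$, and this is exactly what the linear-relations machinery is built to provide. A secondary obstacle is finiteness bookkeeping for the exceptional tubes: one must rule out that any of the finitely many ``small'' exceptional modules, or any of their higher relatives, slips past cellular length three. Together with the lower bound of~\cite{ChristensenWang:ghostNumbers}, this yields ghost number exactly three.
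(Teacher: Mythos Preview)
Your route is dual to the paper's and substantially heavier. The paper never classifies indecomposable $KQ_8$-modules and never touches the Auslander--Reiten quiver. Instead it takes an arbitrary threefold ghost $f\colon M\to N$ between projective-free modules, embeds $M$ in an injective $R\otimes_K V$, and constructs an extension $\bar f\colon R\otimes_K V\to N$ by hand. Since $f$ kills $\soc^3(M)=M\cap(J^2\otimes V)$, only $M/\soc^3(M)\subseteq (J/J^2)\otimes V\cong V^2$ matters; this is a linear relation on $V$, and Kronecker's normal form for such relations (Theorem~\ref{thm:Kronecker}) breaks it into four explicit shapes. The engine is then Lemma~\ref{lem:15Jul2015}, a direct computation in Dade's generators showing that a threefold ghost sends any $m\equiv x\otimes v\pmod{J^2\otimes V}$ into $xyx\cdot N$ (and symmetrically for $y$); this is exactly enough to write down $\bar f(1\otimes e_j)$ on each Kronecker block. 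A finite field-extension lemma lets one assume $\mathbb F_4\subseteq K$ and that all Case~\enref{Kronecker-4} characteristic polynomials split, so Jordan form is available.

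Your plan names the same ingredients---Dade's generators, Kronecker pencils, linear relations---but funnels them through a classification of all indecomposables and a tube-by-tube bound on generating length. That is a legitimate reformulation, but the step you yourself flag as ``the main obstacle'' (matching every tube module, of arbitrary quasi-length and over arbitrary $K$, with an explicit pencil whose cone has length at most two or three) is precisely the whole content of the theorem, and you do not carry it out; invoking ``the formalism of linear relations'' at that point is circular, since that formalism is what the paper uses to \emph{bypass} the module classification altogether. Two smaller cautions: the tube containing $K$ is not rank one, since $\tau K=\Omega^2K\not\cong K$ (period four, not two); and the reduction ``it suffices to treat indecomposable $M$'' already presupposes a Krull--Schmidt decomposition over $K$, so your insistence on staying over $K$ for the band parametrisation does not by itself avoid the descent issues---the paper's field-extension trick (Lemma~\ref{lem:finiteExtension}) handles this more cleanly.
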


\noindent
We claim therefore that every threefold ghost map $M \xrightarrow{f} N$ is stably trivial. To show this, we take any embedding $M \rightarrowtail I$ of $M$~in a finitely generated $KQ_8$-module and show that $f$ factors through~$I$.
\medskip

\noindent
In Section~\ref{sect:Dade}, we recall Dade's presentation of the group algebra $KQ_8$ and derive some properties of ghost maps, including the crucial Lemma~\ref{lem:15Jul2015}\@. In Section~\ref{sect:Kronecker}, we recall a theorem of Kronecker which classifies the linear relations on a vector space~$V$. This leads us to the construction of the lift in Section~\ref{sect:recoating}: We have $I = KQ_8 \otimes_K V$ for some $K$-vector space $V$. As we may assume $M$ to be projective-free, we have $M \subseteq J \otimes_K V$ for $J$ the Jacobson radical $J = J(KQ_8)$. Since a threefold ghost kills $\soc^3(M)$, it follows that $f$ factors through $M / \soc^3(M)$, which is a subspace of $(J/J^2) \otimes_K V \cong V^2$. That is, $M/\soc^3(M)$ is a linear relation on~$V$; and using Lemma~\ref{lem:15Jul2015} we are able to construct a lift for each  indecomposable summand in its Kronecker decomposition, thus proving the theorem.
\medskip

\noindent
\textbf{Acknowledgements}
The first author would like to thank the Institute for Mathematics of the University of Jena for their hospitality.

\section{Ghost maps and Dade's generators}
\label{sect:Dade}

\noindent
We only need the following property of ghost maps.

\begin{lemma}[\cite{CheboluChristensenMinac:ghosts}, Proposition~2.1]
\label{lem:DadeTateQ8}
Let $G$ be a $p$-group, $K$ a field of characteristic~$p$, and $M \xrightarrow{f} N$ a ghost map between projective-free $KG$-modules. Then $\Bild(f) \subseteq \rad(N)$ and $\soc(M) \subseteq \ker(f)$. \qed
\end{lemma}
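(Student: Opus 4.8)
The plan is to read off both conclusions from the vanishing of $f$ in just the two Tate degrees $-1$ and $0$, exploiting that $KG$ is local and symmetric. First I would record what we need about $KG$: its only simple module is the trivial module $K$; it is a symmetric algebra, so its left socle is one-dimensional; and the norm element $\mathrm{Nm}=\sum_{g\in G}g$ is central, nonzero, and killed by the augmentation ideal $I=\rad(KG)$, since $(g-1)\mathrm{Nm}=0$ for all $g$. Hence $K\cdot\mathrm{Nm}=\soc({}_{KG}KG)$ is the unique minimal left ideal of $KG$, and therefore lies inside every nonzero left ideal.

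The technical heart is the claim that $\mathrm{Nm}\cdot X=0$ for every finitely generated projective-free $KG$-module $X$. Reducing to $X$ indecomposable, suppose $\mathrm{Nm}\cdot x\neq 0$ for some $x\in X$. Then $\Ann_{KG}(x)$ is a left ideal not containing $\mathrm{Nm}$, hence (by the previous paragraph) the zero ideal, so $KG\cdot x\cong KG$ is a free submodule of $X$; since $KG$ is self-injective this submodule splits off, contradicting that $X$ is projective-free. This is the step I expect to be the main obstacle, although it is short once set up correctly; the remainder is bookkeeping.

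Granting this, the lemma follows from the standard descriptions $\hat H^0(G,M)=M^G/\mathrm{Nm}\cdot M$ and $\hat H^{-1}(G,M)={}_{\mathrm{Nm}}M/IM$, where ${}_{\mathrm{Nm}}M=\{m\in M:\mathrm{Nm}\cdot m=0\}$. For $M$ (and likewise $N$) projective-free we have $\mathrm{Nm}\cdot M=0$, so these become $\hat H^0(G,M)=M^G$ and $\hat H^{-1}(G,M)=M/IM$; and since $K$ is the only simple module, $M^G=\soc(M)$ and $M/IM=M/\rad(M)$. These identifications are natural in $M$, so the maps induced by $f$ are $\hat H^0(G,f)=f|_{\soc M}\colon\soc M\to\soc N$ and $\hat H^{-1}(G,f)=\bar{f}\colon M/\rad M\to N/\rad N$. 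As $f$ is a ghost both of these vanish, which says precisely that $\soc(M)\subseteq\ker(f)$ and $\Bild(f)\subseteq\rad(N)$.

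As a variant worth noting, once the socle statement is proved the image statement can instead be obtained by applying it to the $K$-dual $f^{*}\colon N^{*}\to M^{*}$: duals of projective-free modules are projective-free, $f^{*}$ is again a ghost by Tate duality, and $\soc(N^{*})\subseteq\ker(f^{*})$ translates back through the annihilator pairing (using $\soc(N^{*})=(\rad N)^{\perp}$ and $\ker(f^{*})=(\Bild f)^{\perp}$) into $\Bild(f)\subseteq\rad(N)$. I nonetheless prefer the route through $\hat H^{-1}$, since it avoids invoking the Tate duality isomorphism.
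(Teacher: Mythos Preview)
Your argument is correct. The paper itself does not prove this lemma: it simply cites \cite{CheboluChristensenMinac:ghosts}, Proposition~2.1, and marks it with \qed. So there is no proof in the paper to compare against; what you have written is a self-contained justification of the cited fact, and it is essentially the standard one---using that for a $p$-group the Tate groups in degrees $0$ and $-1$ of a projective-free module reduce to $\soc(M)$ and $M/\rad(M)$, via the vanishing of the norm map. The key step (that $\mathrm{Nm}\cdot X=0$ for $X$ projective-free because a nonzero $\mathrm{Nm}\cdot x$ forces a free summand, using self-injectivity of $KG$) is handled cleanly. Your dual-map variant is also fine, though unnecessary once the $\hat H^{-1}$ route is in place.
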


\noindent
The next result is presumably well-known.

\begin{lemma}
\label{lem:finiteExtension}
Let $G$ be a finite group, $K/k$ a finite field extension, and $M \xrightarrow{f} N$ a map in $\stmod{kG}$. If $K \otimes_k M \xrightarrow{\Id_K \otimes f} K \otimes_k N$ is trivial in $\stmod{KG}$, then $f$ is trivial in $\stmod{kG}$.
Hence $\operatorname{ghost\ number}(kG) \leq \operatorname{ghost\ number}(KG)$.
\end{lemma}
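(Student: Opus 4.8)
The plan is to use a transfer (or trace) argument. The key point is that the field extension $K/k$ gives rise, after applying $-\otimes_k kG$, to a situation where $M$ is a direct summand of $\Res_k^K K\otimes_k M$ in a suitable sense: more precisely, I would exploit the composite $M \to K\otimes_k M \to M$, where the first map is $m\mapsto 1\otimes m$ and the second is the $kG$-linear map coming from a $k$-linear retraction $K\to k$ (for instance the one sending $1$ to $1$ and any chosen $k$-basis complement to $0$, or more canonically a suitable multiple of the field trace when $K/k$ is separable — but one should avoid assuming separability and instead just pick a $k$-linear splitting of the inclusion $k\hookrightarrow K$). This composite is the identity on $M$, and it is natural in the $kG$-module, so it gives a commuting square relating $f$ to $\Id_K\otimes f$.

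Concretely, I would argue as follows. Write $\bar f = \Id_K\otimes f\colon K\otimes_k M\to K\otimes_k N$. Fix a $k$-linear retraction $r\colon K\to k$ with $r(1)=1$; this induces $kG$-linear maps $r_M\colon K\otimes_k M\to M$ and $r_N\colon K\otimes_k N\to N$, and one checks immediately that $r_N\circ\bar f = f\circ r_M$ as honest $kG$-module maps, and that $r_M\circ(1\otimes-) = \id_M$, $r_N\circ(1\otimes-)=\id_N$. Now suppose $\bar f$ is stably trivial, so $\bar f$ factors in $\stmod{KG}$, i.e.\ $\bar f = v\circ u$ for some maps $u\colon K\otimes_k M\to P$, $v\colon P\to K\otimes_k N$ with $P$ projective as a $KG$-module. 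Since $K\otimes_k kG\cong KG$ is a free $KG$-module and restriction of a projective $KG$-module along $k\hookrightarrow K$ is a projective (indeed free, up to the factor $[K:k]$) $kG$-module, $\Res_k^K P$ is projective over $kG$. Then $f = r_N\circ\bar f\circ(1\otimes-) = (r_N\circ v)\circ(u\circ(1\otimes-))$ factors through the projective $kG$-module $\Res_k^K P$, hence $f$ is stably trivial in $\stmod{kG}$.

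Once the lemma is established for arbitrary maps $f$, the inequality $\operatorname{ghost\ number}(kG)\leq\operatorname{ghost\ number}(KG)$ follows formally: if $f_1,\dots,f_n$ is a composable string of ghost maps between $kG$-modules whose composite is nonzero in $\stmod{kG}$, then by the lemma the composite $\Id_K\otimes(f_n\circ\cdots\circ f_1)=(\Id_K\otimes f_n)\circ\cdots\circ(\Id_K\otimes f_1)$ is nonzero in $\stmod{KG}$; and each $\Id_K\otimes f_i$ is again a ghost, because Tate cohomology commutes with the flat base change $K\otimes_k-$ (one has $\hat H^*(G,K\otimes_k M)\cong K\otimes_k\hat H^*(G,M)$ naturally in $M$, so $(\Id_K\otimes f_i)_*$ is $\Id_K\otimes (f_i)_* = 0$). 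Hence the nilpotency degree of the ghost ideal over $KG$ is at least that over $kG$.

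The main obstacle, and the only point requiring a little care, is the claim that $\Res_k^K P$ is $kG$-projective for $P$ a projective $KG$-module — but this is standard, since it suffices to check it for $P = KG$, where $\Res_k^K KG\cong (kG)^{[K:k]}$ as $kG$-modules (as $K$ is finite over $k$, so finite-dimensional); the general case follows by taking summands of free modules. A secondary point is to make sure the naturality square $r_N\circ\bar f = f\circ r_M$ holds on the nose at the level of module maps (not just stably), which is immediate from the definition of $r_M$, $r_N$ as $\id\otimes$-maps through $r$. With these in hand the argument is complete.
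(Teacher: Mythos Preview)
Your argument is correct and is essentially the same as the paper's: both pick a $k$-linear splitting $r$ (the paper's $\pi$) of $k\hookrightarrow K$, write $f=(r\otimes\Id_N)\circ(\Id_K\otimes f)\circ(i\otimes\Id_M)$, and observe that a finitely generated projective $KG$-module remains finitely generated projective over $kG$. Your write-up merely spells out in more detail the naturality square and the reason $\Res_k^K P$ is $kG$-projective, and gives an explicit justification (via base change of Tate cohomology) for the paper's one-line remark that extending scalars preserves ghosts.
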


\begin{proof}
As a map of $k$-vector spaces, inclusion $k \stackrel{i}{\hookrightarrow} K$ is a split monomorphism; let $K \stackrel{\pi}{\twoheadrightarrow} k$ be a splitting. Suppose that $\Id_K \otimes f$ factors through a finitely generated $KG$-projective module $P$. Then $f = (\pi \otimes \Id_N) \circ (\Id_K \otimes f) \circ (i \otimes \Id_M)$ also factors through~$P$, which is also a finitely generated $kG$-projective module. The last part follows, since extending scalars preserves ghost maps.
\end{proof}

\noindent
Consider now the quaternion group $Q_8 = \langle i, j \rangle$.
Let $K$ be a field of characteristic~$2$ which contains $\f[4] = \{0,1,\omega,\bar{\omega}\}$. In~\cite[(1.2)]{Dade:quaternion}, Dade defines $x,y \in J(KQ_8)$ by
\begin{xalignat*}{2}
x & = \omega i + \bar{\omega} j + ij & y & = \bar{\omega} i + \omega j + ij \, .
\end{xalignat*}
He then shows that $KQ_8$ is the $K$-algebra generated by $x,y$ with relations
\begin{xalignat*}{3}
x^2 &= yxy & y^2 &= xyx & xy^2 & = y^2x = x^2 y = yx^2 = 0 \, .
\end{xalignat*}
Hence $1,x,y,xy,yx,xyx,yxy,xyxy=yxyx$ is a $K$-basis of~$KQ_8$.

\begin{notn}
From now on, we write $R = KQ_8$ and $J = J(R) = \rad(R) = (x,y) \trianglelefteq R$.
\end{notn}

\begin{lemma}
\label{lem:surj}
Suppose that $[t + J^2(R)] \in \mathbb{P}(J/J^2)$ is neither $[x+J^2]$ nor $[y+J^2]$. Then for all $R$-modules $M$, the map $\rad(M) \rightarrow \rad^2(M)$, $m \mapsto tm$ is surjective.
\end{lemma}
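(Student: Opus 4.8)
The plan is to reduce the assertion to a single ring-theoretic identity in $R$ and then verify that identity by a short descending induction on the powers of $J$, using only Dade's relations. The first step is the observation that for any $R$-module $M$ one has $\rad(M) = JM$ and $\rad^2(M) = J^2M$, and that the image of the map $m \mapsto tm$ on $\rad(M)$ is the subspace $(tJ)M$, where $tJ$ denotes the $K$-span in $R$ of the products $tz$, $z \in J$. Since $t \in J$ we certainly have $tJ \subseteq J^2$, so it is enough to prove the reverse inclusion $J^2 \subseteq tJ$: once $tJ = J^2$ we get $\rad^2(M) = J^2 M = (tJ)M = t\cdot\rad(M)$ for every $M$, which is exactly the surjectivity claimed.

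To exploit the hypothesis, recall that $J/J^2$ has $K$-basis the images of $x$ and $y$, so that $\mathbb{P}(J/J^2)$ is identified with $\mathbb{P}^1(K)$ via $[x+J^2] = (1:0)$ and $[y+J^2] = (0:1)$. Writing $t \equiv ax + by \pmod{J^2}$, the hypothesis says precisely that $a$ and $b$ are both nonzero; set $t_2 := t - ax - by \in J^2$. I would then prove $J^n \subseteq tJ$ for $n = 4, 3, 2$ by downward induction, at each stage multiplying $t$ by a suitable element of Dade's basis of $J$ and discarding, via the inductive hypothesis, every summand that lies in a higher power of $J$. Concretely: $t\cdot yxy = a\,xyxy$ on the nose (since $y^2\cdot xy = 0$ and $t_2\cdot yxy \in J^5 = 0$), so $xyxy \in tJ$ and $J^4 \subseteq tJ$; next $t\cdot yx \equiv a\,xyx$ and $t\cdot xy \equiv b\,yxy$ modulo $J^4$ (using $y^2 x = 0$, $x^2 y = 0$ and $t_2 J^2 \subseteq J^4$), so $J^3 \subseteq tJ$; finally $t\cdot x \equiv b\,yx$ and $t\cdot y \equiv a\,xy$ modulo $J^3$ (using $x^2 = yxy \in J^3$, $y^2 = xyx \in J^3$ and $t_2 J \subseteq J^3$), so $J^2 \subseteq tJ$. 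Since $a, b \in K^{\times}$ this gives $tJ = J^2$ and hence the lemma.

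I expect the only real labour to be the bookkeeping in those computations: one must check that the various triple and quadruple products ($x^2y$, $y^2x$, $x^2\cdot yx$, $y^2\cdot xy$, and so on) vanish where claimed, which follows routinely from $xy^2 = y^2x = x^2y = yx^2 = 0$ together with $xyxy = yxyx$. The one genuinely important structural point is to run the induction downward starting from $J^4$: this is what makes the correction term $t_2$ harmless at every stage. The same computation also makes transparent why $[x]$ and $[y]$ must be excluded, since $xJ$ is spanned by $xy, xyx, yxy, xyxy$ and therefore misses $yx$ (because $x\cdot xy = x^2 y = 0$ and $x^2 \in J^3$), so $xJ \subsetneq J^2$, and symmetrically $yJ$ misses $xy$.
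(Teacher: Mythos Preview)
Your proof is correct. Both you and the paper first reduce to the ring-theoretic statement $tJ = J^2$ (the paper phrases this as ``it is enough to prove the case $M=R$''), but from there the arguments diverge. The paper invokes Nakayama's lemma to reduce further to the single induced map $J/J^2 \to J^2/J^3$, notes that both spaces are two-dimensional, and then checks injectivity with a one-line computation of $tr \bmod J^3$. You instead establish $tJ = J^2$ by an explicit downward induction through the layers $J^4 \subseteq tJ$, $J^3 \subseteq tJ$, $J^2 \subseteq tJ$, at each stage multiplying $t$ by a Dade basis element and absorbing the error terms into the layer already handled. Your route is more elementary---it avoids Nakayama entirely and yields explicit preimages---at the price of more bookkeeping; the paper's route is shorter and makes clear that only the top graded piece matters.
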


\begin{proof}
It is enough to prove the case $M=R$; and by Nakayama it suffices to prove that the map $J / J^2 \rightarrow J^2/J^3$, $r + J^2 \mapsto tr + J^3$ is surjective. As $J/J^2$ and $J^2/J^3$ are both two-dimensional, $r \mapsto tr$ is surjective if and only if it is injective.

If $t \in \alpha x + \beta y + J^2(R)$ and $r \in \lambda x + \mu y + J^2(R)$ and then $tr \in \alpha \mu xy + \beta \lambda yx + J^3(R)$. So if $tr \in J^3$ then $\alpha \mu = 0 = \beta \lambda$. But the assumption on~$t$ means that $\alpha,\beta$ are both non-zero: so $r \in 0 + J^2$.
\end{proof}

\begin{lemma}
\label{lem:15Jul2015}
Suppose that $M \xrightarrow{f} N$ is a threefold ghost for $KQ_8$, with $M,N$ projective-free. Embedding $M$ in an injective module $R \otimes_K V$ for some $K$-vector space~$V$, we have $M \subseteq J \otimes_K V$.
Suppose further that $m \in M$ satisfies $m \in t \otimes v + J^2 \otimes_K V$ with $v \in V$ and $t \in \{x,y\}$. Then there is an $n \in N$ such that
\[
f(m) = \begin{cases} xyxn & t=x \\ yxyn & t=y \end{cases} \, .
\]
\end{lemma}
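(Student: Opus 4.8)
The plan is to reduce everything to Lemma~\ref{lem:DadeTateQ8} together with a careful reading of Dade's relations on the radical and socle series of $R$. First I would dispose of the embedding claim. Since $R$ is local and self-injective, $I:=R\otimes_K V$ is injective, hence projective; if some $m\in M$ did not lie in $J\otimes_K V=\rad(I)$, then—writing $m$ in coordinates over a $K$-basis of $V$—one coordinate would be a unit of $R$, so $R\to I$, $1\mapsto m$, would be a split monomorphism, making $Rm\cong R$ a direct summand of $I$ and hence, by the modular law applied to $Rm\subseteq M\subseteq I$, also of $M$, contradicting that $M$ is projective-free. Therefore $M\subseteq J\otimes_K V$.

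For the main assertion I treat the case $t=x$; the case $t=y$ follows by interchanging $x$ and $y$. I would factor the threefold ghost as $f=G\circ g$ with $g\colon M\to A$ a ghost and $G\colon A\to N$ a product of two ghosts, all modules projective-free, so that $A\subseteq J\otimes_K W$ for some $W$. Write $m=x\otimes v+w$ with $w\in J^2\otimes_K V$. Using $yx^2=0$ one has $yxm=yx\cdot w$, and since $yx\cdot J^2=\langle xyxy\rangle$ this lies in $\langle xyxy\rangle\otimes_K V\subseteq\soc(I)$; so $yxm\in\soc(M)$ and thus $g(yxm)=0$ by Lemma~\ref{lem:DadeTateQ8}. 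Now $g(m)\in\rad(A)$, so $g(m)\equiv xa+ya'\pmod{\rad^2(A)}$ for suitable $a,a'\in A$, and since $\rad^2(A)=J^2A\subseteq J^3\otimes_K W$ is killed on the left by $yx$ and $yx^2=0$, applying $yx$ gives $0=g(yxm)=yxy\,a'$. The coordinates of $a'$ over a basis of $W$ lie in $J$, on which left multiplication by $yxy$ vanishes except that $yxy\cdot x=xyxy$; hence $a'$ has no $x$-coordinate, so its coordinates lie in $\langle y,xy,yx,xyx,yxy,xyxy\rangle$, and then the coordinates of $ya'$ lie in $J^3$. Thus $ya'\in A\cap(J^3\otimes_K W)=\soc^2(A)$, and also $\rad^2(A)\subseteq\soc^2(A)$, so $g(m)\equiv xa\pmod{\soc^2(A)}$.

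To finish, recall that a ghost carries $\soc^{j}$ into $\soc^{j-1}$ (an immediate consequence of Lemma~\ref{lem:DadeTateQ8}), so $G$, being a product of two ghosts, annihilates $\soc^2(A)$; hence $f(m)=G(g(m))=G(xa)=x\,G(a)$. Similarly $\Bild(G)\subseteq\rad^2(N)=J^2N$, so $G(a)\in J^2N$, and since Dade's relations give $xJ^2=\langle xyx,xyxy\rangle$ with $xyxy\cdot N\subseteq xyx\cdot N$, we obtain $f(m)=x\,G(a)\in x(J^2N)=xyx\cdot N$, which yields the required $n$.

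The step I expect to be the real obstacle is the middle one: extracting the $x$-divisibility of $g(m)$ modulo $\soc^2(A)$. This is where the asymmetry of the hypothesis ($m\equiv x\otimes v$ and not $y\otimes v$) is transferred into a restriction on the image of $g$, and it depends quite precisely on Dade's relations—in particular $x^2=yxy$, $yx^2=0$, $y^2x=0$, $xy^2=0$—governing how the letters $x$ and $y$ move through the radical layers of $R$; everything else is routine bookkeeping with those relations.
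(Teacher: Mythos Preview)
Your argument is correct and follows the same underlying idea as the paper's proof: use Dade's relations to see that $yxm\in\soc(M)$, push this through the ghost factorization, and conclude that $f(m)\in x\cdot\rad^2(N)=xyx\cdot N$. The organization differs slightly. The paper factors $f=f_3f_2f_1$ into three single ghosts and performs an explicit three-step chase: from $yxy\beta=0$ it deduces $xy\beta\in\soc(N_1)$, then from $xyf_2(\beta)=0$ it deduces $xyx\gamma=0$ and $yx\gamma\in\soc(N_2)$, and finally $yxf_3(\gamma)=0$ together with $xyx\cdot\rad(N)=0$ gives $f(m)=xf_3f_2(\alpha)$. You instead group $f=G\circ g$ and, after one step, show directly that $ya'\in\soc^2(A)$ and invoke the general fact that a twofold ghost kills $\soc^2$ and lands in $\rad^2$; this packaging is a bit cleaner. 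One small remark: your detour through the embedding $A\subseteq J\otimes_K W$ is unnecessary, since $yxy\,a'=0$ already gives $ya'\in\soc^2(A)$ intrinsically via $xy\cdot ya'=xy^2a'=0$ and $yx\cdot ya'=yxy\,a'=0$.
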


\begin{proof}
We treat the case $t=x$; the other case is analogous.
Hence $m = x \otimes v + xy u + yx w$ for some $u,w \in R \otimes_K V$, and so $yx m = xyxy w \in \soc(M)$.
Let
\[
M = N_0 \xrightarrow{f_1} N_1 \xrightarrow{f_2} N_2 \xrightarrow{f_3} N_3 = N
\]
be a realisation of $f$ as a threefold ghost, with $N_1$~and $N_2$ projective-free.
Recall from Lemma~\ref{lem:DadeTateQ8} that $\soc(N_{i-1}) \subseteq \ker(f_i)$ and $\Bild(f_i) \subseteq \rad(N_i)$.

Since $\soc(M) \subseteq \ker(f_1)$ it follows that $yxf_1(m) = 0$. As $\Bild(f_1) \subseteq \rad(N_1)$ there are $\alpha,\beta \in N_1$ with $f_1(m) = x \alpha + y \beta$.
Since $yx f_1(m) = 0$, we deduce that $yxy \beta = 0$ and hence $xy \beta \in \soc(N_1) \subseteq \ker(f_2)$.

Therefore $xy f_2(\beta) = 0$.
But $\Bild(f_2) \subseteq \rad(N_2)$, and so $f_2(\beta) = x\gamma + y\delta$ with $\gamma,\delta \in N_2$. From $xy f_2(\beta) = 0$ it follows that $xyx \gamma = 0$, hence $yx \gamma \in \soc(N_2) \subseteq \ker(f_3)$ and $yx f_3(\gamma) = 0$. It follows that
\[
f(m) = x f_3 f_2(\alpha) + yx f_3(\gamma) + xyx f_3(\delta) = x f_3 f_2(\alpha) \, ,
\]
since $f_3(\delta) \in \rad(N)$ and therefore $xyx f_3(\delta) \in \rad^4(N) = 0$. So $f(m) = x n'$ for $n' = f_3 f_2(\alpha) \in \rad^2(N)$. But then $n' = xy n'_1 + yx n'_2$ for some $n'_1,n'_2 \in N$, and so $f(m) = xyx n'_2$.
\end{proof}

\section{Kronecker's Theorem}
\label{sect:Kronecker}

\begin{theorem}[Kronecker]
\label{thm:Kronecker}
Let $K$ be a field, $V$ a finite-dimensional $K$-vector space, and $L \subseteq V^2$ a subspace. Suppose further that the pair $(V,L)$ is indecomposable, in the following sense: $V \neq 0$, and there is no proper direct sum decomposition $V = V_1 \oplus V_2$ such that $L = (L \cap V_1^2) \oplus (L \cap V_2^2)$.
Then there is a basis $e_1,\ldots,e_n$ of $V$ such that one of the following cases holds:
\begin{enumerate}
\item \label{enum:Kronecker-1}
$L$ has basis $(e_1,0), (e_2,e_1), (e_3,e_2), \ldots, (e_n,e_{n-1}),(0,e_n)$.
\item \label{enum:Kronecker-2}
$L$ either has basis $(e_1,0)$, $(e_2,e_1)$, $(e_3,e_2)$, \dots, $(e_n,e_{n-1})$ or it has basis $(0,e_1)$, $(e_1,e_2)$, $(e_2,e_3)$, \dots, $(e_{n-1},e_n)$.
\item \label{enum:Kronecker-3}
$L$ has basis $(e_2,e_1), (e_3,e_2), \ldots, (e_n,e_{n-1})$.
\item \label{enum:Kronecker-4}
$L = \{(v,F(v)) \mid v \in V \}$ for an automorphism $F$~of $V$ which has indecomposable rational canonical form with respect to the basis $e_1,\ldots,e_n$. A rational canonical form is indecomposable if it consists of only one block, whose characteristic polynomial is moreover a power of an irreducible element of $K[X]$.
\end{enumerate}
\end{theorem}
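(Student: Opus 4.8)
The plan is to argue by induction on $\dim V$, reducing the classification to the rational canonical form of a single linear operator. Write $p,q\colon L\to V$ for the two coordinate projections. Two remarks are immediate: a vector of $L$ annihilated by both $p$ and $q$ is the zero vector of $V^2$, so $\ker p\cap\ker q=0$; and if $L\neq0$ then $p(L)+q(L)=V$, since otherwise a direct complement of $p(L)+q(L)$ in $V$ would split off a proper summand of the pair. If $L=0$, indecomposability forces $\dim V=1$, which is case~\enref{Kronecker-3} with $n=1$; so assume $L\neq0$. The inductive step divides according to whether one of $p,q$ is an isomorphism.

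Suppose first that one of the projections, say $p$, is an isomorphism (the case of $q$ being the mirror image). Then $L=\{(v,Fv)\mid v\in V\}$ for the operator $F:=q\circ p^{-1}$, and a decomposition $V=V_1\oplus V_2$ satisfies $L=(L\cap V_1^2)\oplus(L\cap V_2^2)$ exactly when both $V_i$ are $F$-invariant. Hence $(V,L)$ is indecomposable precisely when $V$ is an indecomposable $K[X]$-module with $X$ acting as $F$, that is, when $F$ has indecomposable rational canonical form. If $F$ is invertible this is case~\enref{Kronecker-4}; if $F$ is not invertible, then its canonical form being a single block whose characteristic polynomial is a power of an irreducible, that irreducible must be $X$, so $F$ is a single nilpotent Jordan block and a Jordan basis exhibits case~\enref{Kronecker-2} (first basis). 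The mirror situation gives case~\enref{Kronecker-2} (second basis).

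Now suppose neither $p$ nor $q$ is an isomorphism; we show $(V,L)$ is as in case~\enref{Kronecker-1} or~\enref{Kronecker-3}. First, the pencil $p+tq$ over $K(t)$ cannot be both injective and surjective: if it were, then $\dim L=\dim V$ and the regular pencil $\lambda p+\mu q$, being indecomposable, would by the classical theory of regular pencils have determinant a single power $\pi(\lambda,\mu)^m$ of a homogeneous irreducible $\pi$; since $\pi$ is divisible by at most one of $\lambda,\mu$, one of $p,q$ would then be an isomorphism, contrary to assumption. So either $p+tq$ is non-injective over $K(t)$, or (dually, by applying the transposed pencil) it is injective but not surjective, the latter case leading to case~\enref{Kronecker-3}. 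Assume $p+tq$ is non-injective. Clearing denominators in a nonzero kernel vector and choosing one of minimal degree $k$ (necessarily $k\ge1$, since a degree-$0$ kernel vector would lie in $\ker p\cap\ker q=0$) yields $x_0,\dots,x_k\in L$ with $p(x_0)=0$, $q(x_k)=0$ and $p(x_i)=-q(x_{i-1})$ for $1\le i\le k$; minimality of $k$ forces the $x_i$ to be linearly independent in $L$ and the vectors $w_i:=q(x_{i-1})$ ($1\le i\le k$) to be linearly independent in $V$. The sub-pair $V'=\langle w_1,\dots,w_k\rangle$, $L'=\langle x_0,\dots,x_k\rangle$ is then isomorphic to the pair of case~\enref{Kronecker-1} with $n=k$. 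The key point --- the reduction lemma for matrix pencils --- is that $(V',L')$ is a \emph{direct summand} of $(V,L)$: extend the chain to bases of $V$ and $L$ and block-diagonalise the pencil by elementary row and column operations, the extracted block sitting in one corner. Indecomposability then forces the complementary summand to vanish, so $(V,L)$ is the pair of case~\enref{Kronecker-1}.

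The step I expect to be the main obstacle is exactly this splitting-off --- the reduction lemma --- together with the invoked structure theorem for regular pencils: both are classical (this is Kronecker's theory of pencils of matrices, treated for instance in Gantmacher's book), but carrying them out from scratch over an arbitrary field is a somewhat delicate exercise in elementary linear algebra. A clean alternative that outsources it is to regard $(V,L)$ as a representation of the Kronecker quiver $\bullet\rightrightarrows\bullet$ whose two arrows are jointly injective: among the indecomposable representations, joint injectivity excludes exactly the simple representation concentrated at the source vertex, and it then remains only to match the surviving indecomposables --- preprojective, regular, preinjective --- with cases~\enref{Kronecker-3}, \enref{Kronecker-2}/\enref{Kronecker-4}, and \enref{Kronecker-1} respectively, which is a routine bookkeeping check.
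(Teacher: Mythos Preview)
Your proposal is correct. The paper's own proof is essentially your final paragraph: it observes that a linear relation $L\subseteq V^2$ is the same thing as a representation $L\rightrightarrows V$ of the Kronecker quiver with $\ker p\cap\ker q=0$, and then simply cites the classification of indecomposable Kronecker-quiver representations (Theorem~4.3.2 in Benson's \emph{Representations and Cohomology~I}), leaving the matching of cases to the reader.

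The main body of your argument takes the older, more hands-on route through matrix-pencil theory \`a la Kronecker--Weierstrass (as in Gantmacher). This has the merit of being closer to self-contained once one is willing to spell out the reduction lemma and the regular-pencil step, and it makes the structure of the four cases transparent: the dichotomy ``some projection is an isomorphism'' versus ``neither is'' cleanly separates the regular representations (cases~\enref{Kronecker-2} and~\enref{Kronecker-4}) from the singular ones (cases~\enref{Kronecker-1} and~\enref{Kronecker-3}). The paper's approach buys brevity by outsourcing everything to a single reference; yours buys insight at the cost of invoking two nontrivial classical lemmas that you correctly flag as the real work. Since the theorem is used in the paper only as a black box, either approach serves equally well here.
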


\begin{proof}
In the language of \cite[p.~ 112]{Benson:I}, the assumptions say that $L$~is an indecomposable linear relation on~$V$, which is the same thing as an indecomposable representation of the Kronecker quiver with $\ker(a) \cap \ker(b) \neq 0$. So the result can be read off from Kronecker's Theorem (Theorem~4.3.2 of~\cite{Benson:I}): note that Case~(i) in~\cite{Benson:I} corresponds to our cases \enref{Kronecker-2}~and \enref{Kronecker-4}\@.
\end{proof}

\begin{corollary}
\label{cor:Kronecker}
For every subspace $L \subseteq V^2$ there is a direct sum decomposition $V = \bigoplus_{i=1}^r V_i$ such that
\begin{enumerate}
\item $L = \bigoplus_{i=1}^r L_i$ for $L_i = L \cap V_i^2$.
\item For each $1 \leq i \leq r$ the pair $(V_i,L_i)$ is indecomposable in the sense of Theorem~\ref{thm:Kronecker}\@.
\end{enumerate}
We write $(V,L) = \bigoplus_{i=1}^r (V_i,L_i)$. \qed
\end{corollary}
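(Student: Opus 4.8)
The plan is to prove Corollary~\ref{cor:Kronecker} by induction on $\dim_K V$, with Theorem~\ref{thm:Kronecker} supplying the indecomposable ``atoms'' into which $(V,L)$ must be resolved. The corollary is precisely the existence half of a Krull--Schmidt statement for linear relations on a vector space --- equivalently, for finite-dimensional representations of the Kronecker quiver --- and since the uniqueness half is not needed, a bare-hands induction suffices.

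Here is the inductive step. If $V = 0$ we take the empty decomposition ($r = 0$) and conditions (1), (2) hold vacuously. If $V \neq 0$ and $(V,L)$ is already indecomposable in the sense of Theorem~\ref{thm:Kronecker}, we take $r = 1$, $V_1 = V$, $L_1 = L$. Otherwise the negation of the indecomposability hypothesis gives a proper direct sum decomposition $V = W_1 \oplus W_2$ --- proper meaning $W_1 \neq 0 \neq W_2$ --- with $L = (L \cap W_1^2) \oplus (L \cap W_2^2)$. Since $\dim_K W_1 < \dim_K V$ and $\dim_K W_2 < \dim_K V$, the inductive hypothesis applies to $(W_1, L \cap W_1^2)$ and to $(W_2, L \cap W_2^2)$, yielding decompositions $W_1 = \bigoplus_j W_{1,j}$ and $W_2 = \bigoplus_k W_{2,k}$ with each piece indecomposable in the required sense. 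Concatenating gives $V = \bigl(\bigoplus_j W_{1,j}\bigr) \oplus \bigl(\bigoplus_k W_{2,k}\bigr)$.

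One then checks that intersecting $L$ with the square of a small summand, say $W_{1,j}$, yields the same subspace whether computed inside $V^2$ or inside $W_1^2$: since $W_{1,j}^2 \subseteq W_1^2 \subseteq V^2$ we have $L \cap W_{1,j}^2 = (L \cap W_1^2) \cap W_{1,j}^2$, which is exactly the summand produced by the inductive hypothesis applied to $W_1$. Hence $L = \bigl(\bigoplus_j (L \cap W_{1,j}^2)\bigr) \oplus \bigl(\bigoplus_k (L \cap W_{2,k}^2)\bigr)$, so (1) and (2) hold for the concatenated family and the induction closes.

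The only point needing any care --- and it is minor --- is reading off from Theorem~\ref{thm:Kronecker} that ``$V \neq 0$ and $(V,L)$ not indecomposable'' forces a genuine two-summand splitting with both summands nonzero; this is simply the contrapositive of the indecomposability clause stated there. Alternatively one could invoke the fact that the category of linear relations on finite-dimensional $K$-vector spaces is Krull--Schmidt, but the induction above is shorter and self-contained.
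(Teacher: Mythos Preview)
Your induction is correct: the contrapositive of the indecomposability clause in Theorem~\ref{thm:Kronecker} supplies a proper splitting whenever $(V,L)$ is decomposable, and the compatibility check $L \cap W_{1,j}^2 = (L \cap W_1^2) \cap W_{1,j}^2$ is exactly what is needed to concatenate the inductive decompositions. The paper itself gives no argument at all---the corollary is stated with a bare \qed, treating it as an immediate consequence of the Kronecker classification (equivalently, of Krull--Schmidt for Kronecker-quiver representations)---so your proof is not so much a different route as a spelled-out version of what the paper leaves implicit.
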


\section{Constructing the lift}
\label{sect:recoating}
\noindent
Recall that $x+J^2$, $y+J^2$ is a basis of $J/J^2$.
Let $V$ be a finite dimensional $K$-vector space. Then any submodule $M \subseteq J \otimes_K V$ defines a subspace of $V^2$:
\begin{align*}
L_{x,y}(M) & := \{ (u,w) \in V^2 \mid x \otimes u + y \otimes w \in M + J^2 \otimes_K V \} \, .
\end{align*}
The proof of the following result is then immediate.

\begin{lemma}
\label{lemma:newpeel}
Let $M \subseteq J \otimes_K V$. Then
\begin{enumerate}
\item \label{enum:newpeel-1}
$\soc^3(M) = M \cap (J^2 \otimes_K V)$.
\item \label{enum:newpeel-4}
Set $L = L_{x,y}(M)$, and let $(V,L) = \bigoplus_{i=1}^r (V_i,L_i)$ be the direct sum decomposition of Corollary~\ref{cor:Kronecker}\@. If each $L_i$ has basis $(u_{i1},w_{i1}), \ldots,(u_{i,d_i}, w_{i,d_i})$, then for any choice of elements
\[
m_{ij} \in M \cap (x \otimes u_{ij} + y \otimes w_{ij} + J^2 \otimes_K V) \, .
\]
we have $M = \soc^3(M) + \sum_{i=1}^N M_i$, where $M_i = \sum_{j=1}^{d_i} R m_{ij}$. \qed
\end{enumerate}
\end{lemma}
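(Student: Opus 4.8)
The plan is to handle the two parts in turn; each is a direct unwinding of the definitions, so the word ``immediate'' in the statement is justified and there is no serious obstacle.

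For part~\enref{newpeel-1}: recall that for any $R$-module $P$ and any $n\geq 1$ one has $\soc^n(P)=\{p\in P\mid J^n p=0\}$, and hence socle layers are compatible with inclusions: if $M\subseteq P$ then $\soc^n(M)=M\cap\soc^n(P)$. Taking $P=R\otimes_K V$, it therefore suffices to identify $\soc^3(R\otimes_K V)$ with $J^2\otimes_K V$. Since $-\otimes_K V$ is exact one has $J^n(R\otimes_K V)=J^n\otimes_K V$ and $\soc^n(R\otimes_K V)=\soc^n(R)\otimes_K V$, so everything comes down to the algebra-theoretic statement $\soc^3(R)=\rad^2(R)$. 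The inclusion $\rad^2(R)=J^2\subseteq\soc^3(R)$ is automatic since $J^3 J^2=J^5=0$; for the reverse, note that $R$ is symmetric of Loewy length~$5$ (so $\soc^k(R)=\rad^{5-k}(R)$), or argue directly from Dade's relations: $J^2$ is spanned by $xy,yx,xyx,yxy,xyxy$, each annihilated by $J^3$, while $yxy\cdot x=xyxy\neq0$ and $xyx\cdot y=xyxy\neq0$ force every element of $J\setminus J^2$ (and every unit) out of $\soc^3(R)$.

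For part~\enref{newpeel-4}: the prescribed $m_{ij}$ exist because $(u_{ij},w_{ij})\in L_i\subseteq L=L_{x,y}(M)$ says precisely that $x\otimes u_{ij}+y\otimes w_{ij}\in M+J^2\otimes_K V$. One inclusion, $\soc^3(M)+\sum_i M_i\subseteq M$, is clear since each $m_{ij}\in M$. For the other, take $m\in M\subseteq J\otimes_K V$. Exactness of $-\otimes_K V$ gives $(J\otimes_K V)/(J^2\otimes_K V)\cong(J/J^2)\otimes_K V$, which the basis $x+J^2,y+J^2$ identifies with $V^2$; let $(u,w)\in V^2$ be the image of $m$, so that $m\equiv x\otimes u+y\otimes w\pmod{J^2\otimes_K V}$ and $(u,w)\in L$ because $m\in M$. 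Decompose $(u,w)=\sum_{i,j}c_{ij}(u_{ij},w_{ij})$ by first splitting along $L=\bigoplus_i L_i$ and then expanding each summand in the given basis of $L_i$, and set $m'=\sum_{i,j}c_{ij}m_{ij}\in\sum_i M_i$. Then $m$ and $m'$ have the same image in $V^2$, so $m-m'\in M\cap(J^2\otimes_K V)$, which equals $\soc^3(M)$ by part~\enref{newpeel-1}. Hence $m=m'+(m-m')\in\sum_i M_i+\soc^3(M)$, as required.

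The one point that needs care — and the reason the lifts $m_{ij}$ must be chosen inside $M$ rather than merely in $R\otimes_K V$ — is the final step: once $m'$ agrees with $m$ modulo $J^2\otimes_K V$ one still needs $m-m'\in M$ before part~\enref{newpeel-1} can be applied. Past this bookkeeping there is nothing further to do.
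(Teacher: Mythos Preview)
Your proof is correct and is precisely the unwinding the paper has in mind: the paper itself gives no argument beyond declaring the result ``immediate'' and placing a \qed, so your write-up simply makes explicit the two straightforward steps (identifying $\soc^3$ via the Loewy structure of $R$, and lifting along the quotient $(J\otimes_K V)/(J^2\otimes_K V)\cong V^2$) that the authors leave to the reader.
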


\begin{proposition}
\label{prop:recoating}
For $M \subseteq J \otimes_K V$ set $L = L_{x,y}(M)$. Let $(V,L) = \bigoplus_{i=1}^r (V_i,L_i)$ be a decomposition into indecomposables. Suppose additionally that for each indecomposable pair $(V_i,L_i)$ which satisfies Case~\enref{Kronecker-4} of Theorem~\ref{thm:Kronecker}, the roots of the characteristic polynomial of the automorphism~$F$ all lie in~$K$.

Suppose further that $N$ is projective-free. Then every threefold ghost $M \xrightarrow{f} N$ extends to a map $R \otimes_K V \xrightarrow{\bar{f}} \rad^2(N)$.
\end{proposition}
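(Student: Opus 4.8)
The plan is to turn the statement into a linear-algebra problem over $N$, handle it one Kronecker block at a time, and solve each block by an explicit formula; the eigenvalue hypothesis enters only in Case~\enref{Kronecker-4}, which is where I expect the single real difficulty.

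\textbf{Reduction to a linear problem.} Iterating Lemma~\ref{lem:DadeTateQ8} along a realisation of $f$ as a threefold ghost with projective-free intermediate terms, and using that $\soc^k(W) = \{w \in W : J^k w = 0\}$ for every $R$-module $W$, one finds that $f$ annihilates $\soc^3(M)$ and has image in $\rad^3(N)$. Since $\soc^3(M) = M \cap (J^2 \otimes_K V)$ by Lemma~\ref{lemma:newpeel}\enref{newpeel-1}, $f$ factors as $f = \tilde f \circ \pi$, where $\pi \colon M \twoheadrightarrow L := L_{x,y}(M)$ sends $m$ to its class in $(J \otimes_K V)/(J^2 \otimes_K V) = (J/J^2) \otimes_K V \cong V^2$ and $\tilde f \colon L \to \rad^3(N)$ is $K$-linear. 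On the other side, an $R$-linear map $\bar f \colon R \otimes_K V \to \rad^2(N)$ is the same as a $K$-linear map $\phi \colon V \to \rad^2(N)$ via $\bar f(r \otimes v) = r\,\phi(v)$, and because $\rad^4(N) = 0$ any such $\bar f$ kills $J^2 \otimes_K V$. Writing $m \in M$ as $\sum_k r_k \otimes e_k$ with $r_k \in J$ and reducing the $r_k$ modulo $J^2$, one checks that $\bar f$ restricts to $f$ on $M$ if and only if
\[
x\,\phi(u) + y\,\phi(w) = \tilde f(u,w) \qquad \text{for all } (u,w) \in L .
\]
As $V = \bigoplus_i V_i$ and $L = \bigoplus_i L_i$, it suffices to produce, for each indecomposable pair $(V_i, L_i)$, a $K$-linear map $\phi_i \colon V_i \to \rad^2(N)$ satisfying this identity on $L_i$, and then set $\phi = \bigoplus_i \phi_i$.

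\textbf{Solving each block.} Here I would use the Kronecker basis $e_1, \dots, e_n$ of $V_i$ from Theorem~\ref{thm:Kronecker} together with Dade's relations $x^2 = yxy$, $y^2 = xyx$, $xy^2 = y^2x = x^2y = yx^2 = 0$; the facts I expect to rely on are $x \cdot \rad^2(N) = xyx\,N$, $y \cdot \rad^2(N) = yxy\,N$, $\rad^3(N) = xyx\,N + yxy\,N$, and — for $t = x + \lambda y$ with $\lambda \neq 0$ — surjectivity of multiplication by $t$ from $\rad^2(N)$ onto $\rad^3(N)$, which follows by applying Lemma~\ref{lem:surj} to the module $\rad(N)$. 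In Cases~\enref{Kronecker-1} and \enref{Kronecker-2} the system coming from the chosen basis of $L_i$ is triangular: writing $\tilde f$ of each interior basis vector as $xyx\,p_j + yxy\,q_j$ (possible since $\Bild\tilde f \subseteq \rad^3(N)$) and noting from Lemma~\ref{lem:15Jul2015} that $\tilde f(e,0) \in xyx\,N$ and $\tilde f(0,e) \in yxy\,N$ for the \emph{pure} basis vectors, one can define $\phi_i$ by a closed formula such as $\phi_i(e_k) = yx\,p_k + xy\,q_{k+1}$ (with the boundary conventions dictated by the pure vectors); the four vanishing relations above — for instance $y\cdot yx\,p = y^2x\,p = 0$ and $x\cdot xy\,q = x^2y\,q = 0$ — make the verification mechanical, by killing exactly the cross terms that would otherwise couple successive equations. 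Case~\enref{Kronecker-3} has no pure basis vectors but the system is free at both ends, so an analogous explicit formula works with no appeal to Lemma~\ref{lem:15Jul2015}.

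\textbf{Case~\enref{Kronecker-4} and the hypothesis.} Since $F$ is an automorphism whose characteristic polynomial is a power of an irreducible with all roots in $K$, that polynomial is $(X - \lambda)^n$ with $\lambda \neq 0$; changing the basis of $V_i$ we may assume $F$ is a single Jordan block, so $L_i$ has basis $(e_k, \lambda e_k + e_{k-1})$ for $1 \le k \le n$ (with $e_0 := 0$) and the identity to be met becomes $t\,\phi_i(e_k) + y\,\phi_i(e_{k-1}) = \tilde f(e_k, \lambda e_k + e_{k-1})$ with $t = x + \lambda y$. One solves this recursively in $k$: given $\phi_i(e_{k-1}) \in \rad^2(N)$, the right-hand side lies in $\rad^3(N)$ (the first summand since $\Bild\tilde f \subseteq \rad^3(N)$, the second since $y\cdot\rad^2(N) \subseteq \rad^3(N)$), hence equals $t\,\phi_i(e_k)$ for some $\phi_i(e_k) \in \rad^2(N)$ by the surjectivity quoted above. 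This recursion is the crux and the reason for the hypothesis: multiplication by $x$ (or by $y$) alone carries $\rad^2(N)$ only onto $xyx\,N$, a proper part of $\rad^3(N)$ in general, so the naive recursion for an arbitrary graph $L = \{(v, Fv)\}$ stalls; putting $F$ into Jordan form lets the single non-pure element $t = x + \lambda y$ with $\lambda \neq 0$ absorb all the division. With $\phi = \bigoplus_i \phi_i$ in hand, $\bar f(r \otimes v) := r\,\phi(v)$ is the required extension. (The Case~\enref{Kronecker-4} blocks whose eigenvalues lie outside $K$ fall outside this Proposition; they will be dealt with afterwards via the scalar-extension Lemma~\ref{lem:finiteExtension}.)
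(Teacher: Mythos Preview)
Your proposal is correct and follows essentially the same route as the paper: reduce to the linear problem $x\,\phi(u)+y\,\phi(w)=\tilde f(u,w)$ on each Kronecker block, solve Cases~\enref{Kronecker-1}--\enref{Kronecker-3} by the explicit formula (using Lemma~\ref{lem:15Jul2015} to handle the pure basis vectors), and in Case~\enref{Kronecker-4} pass to Jordan form and invoke the surjectivity of $t=x+\lambda y$ from Lemma~\ref{lem:surj} to run the recursion. Your packaging via $\tilde f$ and $\phi$ is slightly cleaner than the paper's, which works directly with chosen lifts $m_j\in M$ and defines $\bar f(1\otimes e_j)=xy\,b_{j-1}+yx\,a_j$, but the substance is identical.
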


\begin{proof}
Suppose first that the indecomposable $(V_i,L_i)$ satisfies Case~\enref{Kronecker-1} of Theorem~\ref{thm:Kronecker}\@.
Then $V_i$ has a basis $e_1,\ldots,e_n$ such that $L_i$ has basis $(0,e_1)$, $(e_1,e_2)$, $(e_2,e_3)$, \dots, $(e_{n-1},e_n), (e_n,0)$. By construction of~$L$, there are $m_0,\ldots,m_n \in M$ such that $m_j \in x \otimes e_j + y \otimes e_{j+1} + J^2 \otimes_K V$, where $e_0 = e_{n+1} = 0$. Since $\Bild(f) \subseteq \rad^3(N)$ there are $a_j,b_j \in N$ for $0 \leq j \leq n$ such that
\[
f(m_j) = xyx a_j + yxy b_j \, ;
\]
and by Lemma~\ref{lem:15Jul2015}, we may take $a_0 = b_n = 0$. We then define $\bar{f}$ on $R \otimes_K V_i$ by $\bar{f}(1 \otimes e_j) = xy b_{j-1} + yx a_j$. It follows that
\begin{xalignat*}{3}
\bar{f}(y \otimes e_1) & = f(m_0) & \bar{f}(x \otimes e_j + y \otimes e_{j+1}) & = f(m_j) & \bar{f}(x \otimes e_n) & = f(m_n)\, .
\end{xalignat*}
\item
The two subcases of Case~\enref{Kronecker-2} are analogous to each other, so we only consider the case where $L_i$ has basis $(0,e_1)$, $(e_1,e_2)$, $(e_2,e_3)$, \dots, $(e_{n-1},e_n)$. This corresponds to the case $f(m_n) = 0$ of Case~\enref{Kronecker-1} above, where we may take $a_n = 0$.
\medskip

\item
Case~\enref{Kronecker-3} is even simpler: this time we have $f(m_0) = f(m_n) = 0$ and therefore $b_0 = a_n = 0$.
\medskip

\item
Case~\enref{Kronecker-4}: By assumption, the matrix of $F$ with respect to the basis $e_1,\ldots,e_n$ of $V_i$ is a rational canonical form which has only one block, and the minimal polynomial of this block is $(X - \lambda)^n$ for some $\lambda \in K^{\times}$. It follows that there is a basis $e'_1,\ldots,e'_n$ of $V_i$ with respect to which the matrix of $F$ is the $(n \times n)$ Jordan block for the eigenvalue~$\lambda$. Consequently, $L_i$ has basis
\[
(e'_1,\lambda e'_1) \, , \qquad (e'_j, e'_{j-1} + \lambda e'_j) \; \text{for $2 \leq j \leq n$.}
\]
We may therefore pick elements $m_1,\ldots,m_n \in M$ such that
\begin{align*}
m_1 & \in (x + \lambda y) \otimes e'_1  + J^2 \otimes_K V
\\  m_j & \in y \otimes e'_{j-1} + (x + \lambda y) \otimes e'_j + J^2 \otimes_K V \quad \text{for $2 \leq j \leq n$.}
\end{align*}
So since $f(m_j) \in \rad^3(N)$ for all~$j$, and since $[x + \lambda y + J^2]$ is neither $[x+J^2]$ nor $[y+J^2]$, Lemma~\ref{lem:surj} tells us that we can inductively pick $\bar{f}(1 \otimes e'_1)$, \dots, $\bar{f}(1 \otimes e'_n) \in \rad^2(N)$ such that
\begin{align*}
\bar{f}((x + \lambda y) \otimes e'_1) & = f(m_1) \\
\bar{f}((x + \lambda y) \otimes e'_j) & = f(m_j) + \bar{f}(y \otimes e'_{j-1}) \quad \text{for $2 \leq j \leq n$.}
\end{align*}
Treating each summand $(V_i,L_i)$ in this way we obtain a map $\bar{f} \colon R \otimes_K V \rightarrow \rad^2(N)$, which therefore satisfies $\bar{f}(J^2 \otimes_K V) = 0$. It follows that all the equations above such as $\bar{f}(x \otimes e_j + y \otimes e_{j+1}) = f(m_j)$ can be simplified to $\bar{f}(m_j) = f(m_j)$. As $f$~and $\bar{f}$ are also both zero on $\soc^3(M) \subseteq J^2 \otimes_K V$, it follows by Lemma~\ref{lemma:newpeel} that $\bar{f} \vert_M = f$.
\end{proof}

\begin{proof}[Proof of Theorem~\ref{thm:main}]
By~\cite{ChristensenWang:ghostNumbers}, the ghost number is at least three. So we have to show that every threefold ghost $M \xrightarrow{f} N$ is stably trivial. Stripping projective summands if necessary, we may assume that $M,N$ are projective free. Taking an injective hull, we see that $M$ embeds in $R \otimes_K V$ for some finite-dimensional $K$-vector space $V$. Since $M$ is projective free, we actually have $M \subseteq J \otimes_K V$.

By Lemma~\ref{lem:finiteExtension}, we may replace $K$ by a finite extension field: so we may assume that $\f[4] \subseteq K$. Set $L = L_{x,y}(M)$. Corollary~\ref{cor:Kronecker} says that $(V,L)$ is a direct sum of indecomposables. Replacing $K$ by a finite extension field again if necessary, we may assume in Case~\enref{Kronecker-4} of Theorem~\ref{thm:Kronecker} that the characteristic polynomial of the automorphism $F$ always splits over~$K$. By Proposition~\ref{prop:recoating}, it follows that $f$ extends to a map $\bar{f} \colon R \otimes_K V \rightarrow \rad^2(N)$, meaning that $f$ is stably trivial.
\end{proof}

\ignore{
\bibliographystyle{abbrv}
\bibliography{united}
} % end ignore

\end{document}